\documentclass[11pt,leqno]{amsart}
\usepackage{amsmath,amssymb,amsthm,bbm,,tikz,url,cite}
\usepackage[colorlinks=true,linkcolor=purple,urlcolor=blue,citecolor=blue]{hyperref}
\baselineskip=25pt
\usepackage{geometry} \geometry{left=2.6cm, right=2.6cm, top=3.2cm, bottom=3cm}

\allowdisplaybreaks

\numberwithin{equation}{section}
\newtheorem{thm}[equation]{Theorem}
\newtheorem{lemma}[equation]{Lemma}

\newtheorem{crlr}[equation]{Corollary}
\theoremstyle{definition}
\newtheorem{definition}[equation]{Definition}
\theoremstyle{remark}
\newtheorem{remark}[equation]{Remark}

\newcommand{\Z}{\mathbb Z}

\newcommand{\A}{\mathbb A}
\newcommand{\T}{\mathbb T}
\newcommand{\F}{\mathcal F}
\newcommand{\one}{\mathbbm{1}}

\begin{document}
	
\title{Averaging with the Divisor Function: $\ell^p$-improving and Sparse Bounds}
\author{Christina Giannitsi}
	\address{Department of Mathematics, Georgia Institute of Technology, Atlanta GA 30332, USA}
	\email{cgiannitsi3@math.gatech.edu}
	\thanks{Research supported in part by grant  from the US National Science Foundation, DMS-1949206.}

\begin{abstract}	
We study averages along the integers using the divisor function $d(n)$, defined as
\begin{equation*}
	K_N f (x) = \frac{1}{D(N)} \sum _{n \leq N} d(n) \,f(x+n) ,
\end{equation*}
where $D(N) = \sum _{n=1} ^N d(n) $.
We shall show that these averages satisfy a uniform, scale free $\ell^p$-improving estimate for $p \in (1,2)$, that is 
\begin{equation*}
	\left( \frac{1}{N} \sum |K_Nf|^{p'} \right)^{1/p'}
		\lesssim 
		\left(\frac{1}{N} \sum |f|^p \right)^{1/p} 
\end{equation*}
as long as $f$ is supported on $[0,N]$.

We will also show that the associated maximal function $K^*f = \sup_N |K_N f|$ satisfies $(p,p)$ sparse bounds for $p \in (1,2)$, which implies that $K^*$ is bounded on $\ell ^p (w)$ for $p \in (1, \infty )$, for all weights $w$ in the Muckenhoupt $A_p$ class.
\end{abstract}

\maketitle
\tableofcontents

\section{Introduction}

We establish $\ell ^p$-improving and sparse bounds for the discrete averages formed from the divisor function.
Let $d(n) = \sum _{d|n} 1$ be the usual divisor function, $D(N) = \sum _{n=1}^N d(n)$,  and consider the following averaging operator
\begin{equation*}
K_N f = \frac{1}{D(N)} \sum _{n \leq N} d(n) \,f(x+n)
\end{equation*}
and the associated maximal function 
\begin{equation*}
	K^* f = \sup _N |K_N f|.
\end{equation*}

Let us first establish some notation we shall be using throughout the paper. 
For two quantities $a$ and $b$, we shall write $a \lesssim b$ if there exists a positive constant $C$ such that $a \leq C \, b$. We shall write $a \lesssim _p b$ if they implied constant depends on $p$.
For a function $f$ on the integers, and an interval $I \subset \Z$, we write
\begin{equation*}
\langle f \rangle _{I,p} = \left( \frac{1}{|I|} \, \sum _{x \in I} |f(x)|^p \right) ^{1/p}
\end{equation*}
Also for an interval $I = [a,b] \cap \Z$ we will write $	2I  = [2a-b-1,b] \cap \Z $, $3I  = [2a-b-1,2b-a+1] \cap \Z $
for the doubled and tripled interval respectively. We shall also use $\widehat f$ or $\F f$ for the Fourier transform of $f$, defined as
\begin{equation*}
	\F f (\theta) = \sum _{x \in \Z} f(k) e^{-2 \pi i x \theta} ,
\end{equation*}
and $\check f$ or $\F ^{-1}$ for the inverse Fourier transform, defined as
\begin{equation*}
\F^{-1}  f (x) = \int_0^1 \widehat f (\theta) e^{2 \pi i x \theta} \, d \theta.
\end{equation*}
Finally, if $1\leq p \leq \infty$, let $p'$ denote its conjugate exponent, that is
$\dfrac{1}{p} + \dfrac{1}{p'} = 1.$
\\

Our two main results are a scale free, $\ell ^p$ improving inequality, and a sparse bound for the maximal function $K^*$. More specifically,

\begin{thm} \label{lpimprovingthm}
	For $p \in (1,2)$, there exists $C_p>0$ such that for all positive integers $N$ and functions $f$ supported on an interval $E$ of length $N$, there holds
	\begin{equation}
	\langle K_Nf \rangle _{E,p'} \leq C_p \,  \langle f \rangle _{E,p} 
	\end{equation}
\end{thm}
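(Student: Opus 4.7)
The kernel $K_N(n) = d(n)/D(N)\cdot\mathbf{1}_{[1,N]}(n)$ is a probability distribution on $[1,N]$, and the claim is equivalent to the Young-type operator bound $\|K_N * f\|_{\ell^{p'}} \lesssim_p N^{1-2/p}\|f\|_{\ell^p}$ for $p \in (1,2)$ — the same exponent one gets for the standard uniform average $A_N$.

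My starting point is the Dirichlet hyperbola identity: since $d(n) = \#\{(a,b): ab = n\}$,
\[
D(N)\,K_N f(x) \;=\; 2 \sum_{a \leq \sqrt N} \sum_{b \leq N/a} f(x+ab) \;-\; \sum_{a,b\leq \sqrt N} f(x+ab).
\]
This expresses $K_N$ as a linear combination of arithmetic-progression averages $\widetilde A_a^M f(x) := \frac{1}{M}\sum_{b\leq M} f(x+ab)$. The algebraic core of the scale-free bound is the matching of $D(N) \sim N\log N$ against the harmonic sum $\sum_{a \leq \sqrt N} 1/a \sim \frac{1}{2}\log N$: the two logarithmic factors cancel. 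A direct Young computation gives the single-AP estimate $\|\widetilde A_a^M f\|_{p'} \leq M^{1-2/p}\|f\|_p$, since the kernel of $\widetilde A_a^M$ has $\ell^1$ norm $1$ and $\ell^\infty$ norm $1/M$.

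The main obstacle is that triangle inequality over $a$ loses badly. A short computation shows
\[
\|K_N f\|_{p'} \;\lesssim\; \frac{N^{1-2/p}}{\log N}\sum_{a\leq \sqrt N} a^{2/p - 2}\|f\|_p \;\lesssim\; \frac{N^{1/2 - 1/p}}{\log N}\|f\|_p,
\]
which overshoots the target by a factor of $N^{1/p - 1/2}$ — catastrophic as $p \to 1^+$. To recover the correct exponent one must exploit genuine cancellation across the $a$-variable. I would do this by a Fourier-analytic argument: group the $a$'s into dyadic blocks $a \in [2^k, 2^{k+1})$, bound each block's $\ell^2$ norm via Plancherel and the Dirichlet-kernel estimate $|\widehat{\widetilde A_a^M}(\theta)| \leq \min(1, (M\|a\theta\|)^{-1})$, and interpolate the resulting $\ell^2 \to \ell^2$ bound against a restricted-type $\ell^\infty$ endpoint. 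A natural choice for the latter is $\|K_N\mathbf{1}_F\|_\infty \lesssim (|F|\log N/N)^{1/2}$, which follows from Cauchy--Schwarz and the classical moment bound $\sum_{n\leq N} d(n)^2 \lesssim N(\log N)^3$.

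The hardest technical step is the bookkeeping: ensuring that the logarithmic factors coming from the $d(n)$ moment bounds, from the harmonic sum over $a$, and from Marcinkiewicz interpolation all cancel cleanly to leave a constant depending only on $p$. This balancing is the technical heart of the argument; once it is in place, standard real interpolation delivers the claimed strong-type $\ell^p \to \ell^{p'}$ estimate.
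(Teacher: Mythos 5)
Your reduction to $\|K_N\ast f\|_{p'}\lesssim N^{1-2/p}\|f\|_p$, the hyperbola decomposition, and the single-AP Young bound are all fine, but the mechanism you propose for recovering the lost power of $N$ does not exist in the form you describe. Plancherel per dyadic block cannot beat the trivial bound: the multiplier of the block $a\in[2^k,2^{k+1})$ evaluated at $\theta=0$ (and near any rational with small denominator) has size comparable to $\tfrac{1}{\log N}\sum_{a\sim 2^k}a^{-1}\sim 1/\log N$, with no decay in $N$ or $k$, so the best $\ell^2\to\ell^2$ bound per block is $\sim 1/\log N$, and summing the $\sim\log N$ blocks merely reproduces $\|K_N\|_{\ell^2\to\ell^2}\le 1$ --- which is true of every averaging operator and carries no improving information. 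Interpolating that with your restricted endpoint $\|K_N\mathbf{1}_F\|_\infty\lesssim(|F|\log N/N)^{1/2}$ gives $\|K_N\mathbf{1}_F\|_{p'}\lesssim |F|^{1/2}N^{1/2-1/p}(\log N)^{1/p-1/2}$, which exceeds the target $N^{1-2/p}|F|^{1/p}$ by the factor $(N\log N/|F|)^{1/p-1/2}$; this is unbounded whenever $|F|\ll N$. Working blockwise does not help either: the natural per-block $\ell^\infty$ endpoint degrades like $2^k$ (a sparse $F$ inside a progression of difference $a\sim 2^k$ concentrates the block average), and resumming over $k\le\frac12\log_2N$ brings back exactly the $N^{1/p-1/2}$ loss you already identified for the triangle inequality. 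So the obstruction is not logarithmic bookkeeping; it is a genuine power loss on sets of low density, which is precisely the hard case.

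What is missing is a decomposition in which some piece is \emph{quantitatively small} in $\ell^2$ after the main contributions at rationals are removed, together with a kernel bound of size essentially $1/N$ for the remaining piece. This is how the paper proceeds: using Jutila's major/minor arc asymptotics for $\sum_{n\le N}d(n)e(n\theta)$, one approximates $\widehat K_N$ by multipliers $\widehat L_{Q,N}$ supported near rationals $A/Q$, $Q\le m$, whose inverse transforms are $\tfrac2Q c_Q(x)\,(\Gamma_N\ast\check\chi_s)(x)$; the remainder has multiplier norm $\lesssim m^{-\varepsilon}$ (this is the high term, small in $\ell^2$), while the low term has kernel $\lesssim N^{-1}\sum_{Q\le m}|c_Q(x)|/Q$, controlled on average by Bourgain's Ramanujan-sum estimate, so it behaves like the uniform average up to $m^{1/p'}$. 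Optimizing over $m$, and disposing of the very sparse case $\langle f\rangle_{E,1}\lesssim N^{-\delta p'}$ by the trivial $N^{\delta}$-loss bound, yields the scale-free estimate. Your outline contains no analogue of this main-term subtraction (no use of the divisor exponential-sum asymptotics or of Ramanujan sums), and without it the interpolation scheme you sketch cannot close.
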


Now a collection $\mathcal S$ of intervals is called sparse if for every $I \in \mathcal S$ there exists a set $E_I \subset I$ so that $|E_I|>|I|/2$ and $E_I$ for $I \in \mathcal S$ are pairwise disjoint.

\begin{thm} \label{sparsemaximal}
	For $r,s \in (1,2)$, the maximal operator $K$ is of $(r,s)$-sparse type, that is there exists $C>0$ such that for all compactly supported functions $f$ and $g$, there exists a sparse collection $\mathcal S$ so that the $\ell ^2 (\Z)$ inner product satisfies
	\begin{equation}
	|(K^*f,g)|\leq C \sum _{I \in \mathcal S}  |I| \langle f \rangle _{2I,r} \langle g \rangle _{I,s}
	\end{equation}
\end{thm}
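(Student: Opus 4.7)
My plan is to derive Theorem \ref{sparsemaximal} from Theorem \ref{lpimprovingthm} via the now-standard recursive stopping-time argument used for many discrete averaging operators (Lacey and collaborators). First I would reduce to dyadic scales $N=2^j$, losing a constant since $D(N)$ is essentially monotone, and linearize by choosing a measurable $j:\Z\to\Z$ so that $K^*f(x)\leq 2\,K_{2^{j(x)}}|f|(x)$. One may assume $f,g\geq 0$ with supports in a common large dyadic interval $I_0$.

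Next, I would build the sparse collection $\mathcal S\ni I_0$ recursively: for $I\in\mathcal S$, declare the children $\mathrm{ch}(I)$ to be the maximal dyadic $J\subsetneq I$ on which either $\langle f\rangle_{2J,r}>C\langle f\rangle_{2I,r}$ or $\langle g\rangle_{J,s}>C\langle g\rangle_{I,s}$, with $C$ so large that weak $(r,r)$ and $(s,s)$ bounds for the dyadic maximal function give $\sum_{J\in\mathrm{ch}(I)}|J|<|I|/2$. Then $E_I:=I\setminus\bigcup\mathrm{ch}(I)$ satisfies $|E_I|>|I|/2$, providing sparsity. The stopping rule ensures that for any $x\in E_I$ and any dyadic $J$ with $x\in J\subseteq I$ not contained in a child, $\langle f\rangle_{2J,r}\leq C\langle f\rangle_{2I,r}$ and $\langle g\rangle_{J,s}\leq C\langle g\rangle_{I,s}$.

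Now write $(K^*f,g)=\sum_{I\in\mathcal S}\sum_{x\in E_I}K_{2^{j(x)}}f(x)\,g(x)$ and split each inner sum according to \emph{local} scales $2^{j(x)}\leq|I|$ versus \emph{long} scales $2^{j(x)}>|I|$. A long scale at $x\in E_I$ is reassigned to the unique ancestor $I'\in\mathcal S$ with $|I'|\geq 2^{j(x)}$, where it becomes local; this reduces matters to proving, for each $I$,
\begin{equation*}
\sum_{\ell\leq\log_2|I|}\ \sum_{\substack{x\in E_I\\ j(x)=\ell}}K_{2^\ell}f(x)\,g(x)\ \lesssim\ |I|\,\langle f\rangle_{2I,r}\,\langle g\rangle_{I,s}.
\end{equation*}
Fixing $\ell$, tile $I$ by dyadic subintervals $J$ of length $2^\ell$ and note that for $x\in J$, $K_{2^\ell}f(x)$ depends only on values of $f$ on $J$ together with its right neighbor $J^+$, both contained in $2I$. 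Applying Theorem \ref{lpimprovingthm} on $J$ and on $J^+$ yields $\langle K_{2^\ell}f\rangle_{J,r'}\lesssim\langle f\rangle_{2J,r}\leq C\langle f\rangle_{2I,r}$; H\"older against $g$ and the $g$-stopping bound give the contribution $\lesssim|J|\,\langle f\rangle_{2I,r}\,\langle g\rangle_{I,s}$. Summing over the tiling of $I$ and then over $\ell$ completes the local bound.

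The main obstacle is reaching the full off-diagonal range $(r,s)\in(1,2)^2$ from the diagonal ($p\to p'$) estimate of Theorem \ref{lpimprovingthm}. This is handled by Riesz--Thorin interpolation of Theorem \ref{lpimprovingthm} against the trivial $\ell^\infty\to\ell^\infty$ and $\ell^1\to\ell^1$ bounds, producing $\|K_{2^\ell}\|_{\ell^r\to\ell^{s'}}\lesssim 2^{\ell(1/s'-1/r)}$ with a genuine decay factor (since $1/s'-1/r<0$ in the target range) that is sufficient to sum the geometric series in $\ell$. A secondary technical point is checking that the long-scale reassignment does not overcount: each ancestor $I'$ absorbs only $\log_2(|I'|/|I|)$ worth of descendant scales, which is compatible with the sparse sum.
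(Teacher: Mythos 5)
There is a genuine gap at the heart of your argument: the claim that interpolation supplies ``a genuine decay factor sufficient to sum the geometric series in $\ell$'' is an artifact of working with unnormalized norms, and it disappears exactly when you convert back to the normalized averages that the sparse form requires. Indeed, on a tile $J$ with $|J|=2^\ell$, H\"older plus your interpolated bound gives $\sum_{x\in J}K_{2^\ell}f\,g \le \|K_{2^\ell}f\|_{\ell^{s'}(J)}\|g\|_{\ell^s(J)} \lesssim 2^{\ell(1/s'-1/r)}\,\|f\|_{\ell^r(2J)}\|g\|_{\ell^s(J)} \sim 2^{\ell(1/s'-1/r)}\cdot 2^{\ell/r}\langle f\rangle_{2J,r}\cdot 2^{\ell/s}\langle g\rangle_{J,s} = |J|\,\langle f\rangle_{2J,r}\langle g\rangle_{J,s}$, i.e.\ exactly the scale-free bound with \emph{no} decay in $\ell$. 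Consequently each fixed local scale $\ell$ contributes $\sim |I|\langle f\rangle_{2I,r}\langle g\rangle_{I,s}$, and summing over the $\log_2|I|$ admissible scales (and likewise over the $\log_2(|I'|/|I|)$ reassigned long scales at each ancestor) produces a logarithmic loss in $|I|$, which cannot be absorbed into a uniform sparse constant (test it on $f,g$ of density comparable to $1$ on a single huge interval). This is precisely why a single-scale $\ell^p$-improving estimate by itself does not yield sparse bounds for the \emph{full} maximal operator: one needs an additional ingredient controlling the supremum over scales, either a continuity/smoothing estimate for the single-scale averages (as in the continuous spherical maximal function) or a genuine maximal inequality over the major-arc frequencies; neither appears in your proposal, and the former is not available for these arithmetic averages.

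The paper takes a different route for exactly this reason: it linearizes $K^*$ by an \emph{admissible stopping time} $\tau$ and proves the key estimate (Lemma \ref{sparsethm}) for $K_\tau$ directly, redoing the high/low decomposition at the level of the linearized maximal operator rather than invoking Theorem \ref{lpimprovingthm}. The remainder multipliers $\widehat r_{N,P}$ are summed over scales by a square-function argument (exploiting the polynomial decay $n^{-p'}$ in the linearizing parameter), the supremum over scales of the large-$Q$ approximating multipliers is controlled by Bourgain's multi-frequency maximal lemma \cite{bourgain89}, \cite[Prop.~5.11]{benbourg}, and the low-pass term is handled by the Ramanujan-sum estimate of Lemma \ref{rsumsestimate} together with the admissibility of $\tau$, which is what lets one pass to the localized averages $\langle f\rangle_{E,1}$. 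Your reduction to dyadic scales, the stopping-time construction of $\mathcal S$, and the use of the $\ell^p$-improving input are all in the right spirit (and the off-diagonal range $(r,s)\in(1,2)^2$ is in fact immediate from the diagonal case $p=\min(r,s)$ by monotonicity of normalized norms, so no interpolation is needed there), but without a sup-over-scales estimate of multi-frequency type the argument as written does not close.
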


The sparse result of Theorem \ref{sparsemaximal}, implies not only $\ell ^p$  boundedness for the maximal operator, but also weighted inequalities for weights $w$ for which the ordinary maximal function is bounded, which is a new result. Specifically,

\begin{crlr}
	For any $p \in (1,\infty)$ and all weights $w$ in the Muckenhoupt class $A_p$, the maximal operator $K^* : \ell ^p (w) \to \ell ^p (w)$ is a bounded operator.
\end{crlr}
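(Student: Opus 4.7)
The plan is to deduce the Corollary from Theorem \ref{sparsemaximal} via the now-standard machinery that converts $(r,s)$-sparse form bounds into weighted $\ell^p$ estimates (due in various forms to Bernicot--Frey--Petermichl, Lerner, Conde-Alonso--Rey, and others). By $\ell^p(w)$--$\ell^{p'}(w^{1-p'})$ duality, it suffices to control $|(K^* f, g)|$ by $\|f\|_{\ell^p(w)}\|g\|_{\ell^{p'}(w^{1-p'})}$ for all finitely supported $f,g$, and Theorem \ref{sparsemaximal} already reduces this to bounding an arbitrary sparse $(r,s)$-form.

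Given $p \in (1,\infty)$ and $w \in A_p$, I would first invoke Muckenhoupt's self-improvement (openness) of $A_p$: there exists $\varepsilon>0$ such that $w \in A_{p-\varepsilon}$, and dually $w^{1-p'} \in A_{p'-\varepsilon'}$ for some $\varepsilon'>0$. Choose $r,s \in (1,2)$ sufficiently close to $1$ so that $r<p$, $s<p'$, and
\begin{equation*}
w \in A_{p/r}, \qquad w^{1-p'} \in A_{p'/s};
\end{equation*}
both are possible because $p/r$ can be taken equal to $p-\varepsilon$ by setting $r = p/(p-\varepsilon)$, and similarly for $s$. Apply Theorem \ref{sparsemaximal} with this choice of $(r,s)$ to obtain, for any given $f$ and $g$, a sparse collection $\mathcal S$ with
\begin{equation*}
|(K^* f, g)| \leq C \sum_{I \in \mathcal S} |I|\, \langle f \rangle_{2I,r} \langle g \rangle_{I,s}.
\end{equation*}

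Now I would handle the sparse form in the standard way. Using the disjoint major subsets $E_I \subset I$ with $|E_I| > |I|/2$, and the fact that $I \subset 2I$, every $x \in E_I$ satisfies $\langle f \rangle_{2I,r} \leq M_r f(x)$ and $\langle g \rangle_{I,s} \leq M_s g(x)$, where $M_r h := M(|h|^r)^{1/r}$ is the $r$-th Hardy--Littlewood maximal function. Therefore
\begin{equation*}
\sum_{I \in \mathcal S} |I|\, \langle f \rangle_{2I,r}\langle g \rangle_{I,s}
\lesssim \sum_{I \in \mathcal S} \sum_{x \in E_I} M_r f(x) \, M_s g(x)
\leq \sum_{x \in \Z} M_r f(x)\, M_s g(x).
\end{equation*}
Applying H\"older's inequality with the weight $w$ and then Muckenhoupt's theorem, the $A_{p/r}$ condition on $w$ yields $\|M_r f\|_{\ell^p(w)} \lesssim \|f\|_{\ell^p(w)}$, while the $A_{p'/s}$ condition on $w^{1-p'}$ yields $\|M_s g\|_{\ell^{p'}(w^{1-p'})} \lesssim \|g\|_{\ell^{p'}(w^{1-p'})}$, closing the estimate.

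The only point requiring genuine care is verifying the existence of admissible exponents $(r,s) \in (1,2)^2$ for every $p$ and every $w \in A_p$; this is immediate from the openness of $A_p$ and its duality, so there is no substantive obstacle once Theorem \ref{sparsemaximal} is in hand. The argument is robust in the sense that even a sparse bound restricted to $(r,s)$ in a small neighborhood of $(1,1)$ within $(1,2)^2$ would suffice for the full $A_p$ conclusion stated in the corollary.
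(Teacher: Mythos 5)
Your derivation is correct and is exactly the route the paper intends: the corollary is stated as an immediate consequence of Theorem \ref{sparsemaximal} via the standard sparse-to-weighted machinery (openness of $A_p$, domination of the sparse form by $\sum_x M_r f\, M_s g$ using the disjoint major subsets, and weighted boundedness of $M_r$, $M_s$), which the paper invokes without writing out. Your observation that only $(r,s)$ near $(1,1)$ is needed matches the paper's use of the full range $r,s\in(1,2)$.
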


The study of $\ell ^p$ improving bounds has been an area of particular interest in Harmonic Analysis for over five decades, since the work of Littman \cite{review_litman} and Strichartz \cite{review_stricharz}. 
Operators, and families of operators can improve upon the $\ell^p$ norm (or the $L^p$ one in the continuous case) by achieving bounds over higher $p$-values. In certain cases those bounds are scale free, meaning that the implied constant does not depend on the scale of the operator, however the starting point is usually fixing the scale.
Of course, as mentioned above, the interest in sparse bounds for operators stems from the fact that they immediately imply weighted $\ell^p$ inequalities, and is an active research topic since it was first introduced by Lerner in \cite{review_lerner}.
There has been a plethora of papers focusing on the subject of sparse bounds and $\ell ^p$ improving inequalities in the recent years, like 
\cite{sparsehilbert} and \cite{hugheslpimproving, discreteimproving, kesslerbeforerevisited, kesslerrevisited, sparsebounds, lacunary, review_anderson } and followed by \cite{squares, primes, polys }. More recent results include \cite{paraboloid} and \cite{review_hughesetall}.

Our approach is inspired by \cite{primes} and \cite{squares}. To prove the estimates, we use the same two techniques as in the aforementioned papers, adjusted to fit our particular setting. Particularly, we decompose our operator into a High pass and a Low pass term \cite{Ionescuendpoint, review_hughes, sparsebounds, squares, primes}. This decomposition is key to proving both the $\ell ^p$ improving and the sparse result, although the sparse decomposition is somewhat different. In both cases, Ramanujan sums have an important role to play in estimating the low pass term, and so do results from the number-theoretic literature, involving exponential sums using the divisor function, \cite{jut84}. We should also mention the work in \cite{weber}, where they study ergodic theorems using the divisor function.
\\

\textbf{Acknowledgment.} The author would like to thank Michael Lacey for his continuous guidance, support, and encouragement.

\medskip 

\section{Preliminaries}

Recall the major and minor arcs decomposition. Let $\A_Q = \{ 1 \leq A < Q \, : (A,Q)=1\}$ denote the multiplicative group associated with an integer $Q>1$.
For $s \geq 1$ consider the following sets
\begin{equation*}
\mathcal R _s = \left\lbrace \frac{A}{Q} \in [0,1) \, : A \in \A_Q, \;  \;  2^{s-1} \leq  Q < 2^{s} \right\rbrace
\end{equation*}
For $0<\varepsilon \leq 1/4$ and  $\frac{A}{Q} \in \mathcal R _s$, with $s\leq j \varepsilon$,  we define the j-th major arc at $A/Q$ as
\begin{equation*}
	\mathfrak{M} _j \left( \frac{A}{Q} \right)  
	= \left\lbrace \frac{A}{Q} + \eta  
	:  \; | \eta | < 2^{(\varepsilon -2) j} \right\rbrace
\end{equation*}
Those are disjoint for $\varepsilon$ small enough, for instance when $2^{(\varepsilon-2)j} < \frac{1}{10 \, Q}$.
The $j$-th major arcs are given by 
$\mathfrak{M} _j = \bigcup_{\frac{A}{Q} \in \mathcal R _s} \mathfrak{M} _j (A/Q) $. 
We define the $j$-th minor arcs $\mathfrak{m}_j$ as the complement of $\mathfrak{M}_j$.

Also recall the Ramanujan sums, defined as
\begin{equation}\label{ramanujandefinition}
	c_Q(n) = \sum_{A \in \A_Q} e(An/Q),
\end{equation}
where $e(x) := e^{2 \pi i x}$.

We present the following lemma, based on an important property of Ramanujan sums which is due to Bourgain \cite{bourgain93}. An alternate proof can be found in \cite[Lemma 3.13]{lacunary}. Our proof is fairly simple and follows the same steps as \cite[Lemma 3.4]{primes}.

\begin{lemma} \label{rsumsestimate}
	For any $\varepsilon > 0$ and integer $k>1$, uniformly in $N>J^k$ for some integer $J$, there holds
	\begin{equation}
		\left( \frac{1}{N} \sum _{|x|<N} \left| \sum _{Q=1}^J \frac{c_Q(x)}{Q} \right| ^k \right) ^{1/k} 
		\lesssim _{k , \varepsilon} J^{\varepsilon }.
	\end{equation}
\end{lemma}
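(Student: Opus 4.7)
The plan is to reduce the left-hand side to a classical divisor-counting estimate via Möbius inversion.

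First, I would rewrite the partial sum using the identity $c_Q(n) = \sum_{d\mid \gcd(Q,n)}\mu(Q/d)\,d$; after interchanging the order of summation this gives
\begin{equation*}
F_J(x) := \sum_{Q=1}^{J} \frac{c_Q(x)}{Q} = \sum_{\substack{d\mid x\\ d\leq J}} M\!\left(\tfrac{J}{d}\right), \qquad M(y) := \sum_{m\leq y}\frac{\mu(m)}{m}.
\end{equation*}
Invoking the classical uniform bound $|M(y)| \lesssim 1$ then yields the pointwise majorant $|F_J(x)| \lesssim d_J(x)$, where $d_J(x) := \#\{d\mid x : d\leq J\}$. Raising to the $k$-th power and expanding gives
\begin{equation*}
|F_J(x)|^{k} \lesssim_{k} \sum_{\substack{d_1,\dots,d_k\leq J\\ d_i\mid x}} 1.
\end{equation*}

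Next, summing over $|x|<N$ and interchanging yields
\begin{equation*}
\sum_{|x|<N}|F_J(x)|^{k}\lesssim_{k}\sum_{d_1,\dots,d_k\leq J}\#\bigl\{|x|<N:\operatorname{lcm}(d_1,\dots,d_k)\mid x\bigr\}\leq 2N\!\sum_{d_1,\dots,d_k\leq J}\frac{1}{\operatorname{lcm}(d_1,\dots,d_k)}+J^k,
\end{equation*}
and the boundary error $J^k$ is at most $N$ by the hypothesis $N>J^k$. The remaining sum is estimated by grouping tuples according to their least common multiple $L$: since each $d_i$ must divide $L$, there are at most $\tau(L)^k$ tuples for each $L$, one has $L\leq J^k$, and the divisor bound $\tau(L)\lesssim_{\varepsilon'} L^{\varepsilon'}$ gives
\begin{equation*}
\sum_{d_1,\dots,d_k\leq J}\frac{1}{\operatorname{lcm}(d_1,\dots,d_k)}\leq\sum_{L\leq J^k}\frac{\tau(L)^k}{L}\lesssim_{k,\varepsilon'} J^{k^2\varepsilon'},
\end{equation*}
so that choosing $\varepsilon'$ small enough delivers the required $J^{\varepsilon}$ bound after taking $k$-th roots.

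The only genuinely delicate step is the first one, namely recognizing that the a priori oscillatory expression $F_J(x)$ admits a nonnegative pointwise majorant via Möbius duality; once this reduction is made, the rest is routine divisor arithmetic, and the hypothesis $N>J^k$ plays its natural role in absorbing the arithmetic boundary error.
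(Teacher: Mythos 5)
Your argument is correct, and it proves the lemma exactly as stated by a genuinely different and more elementary route than the paper. The paper's proof quotes Bourgain's estimate (3.44) from \cite{bourgain93} as a black box, namely an $\ell^k$ bound for the dyadic blocks $\sum_{J\leq Q<2J}|c_Q(x)|$, and then assembles the full sum by a dyadic decomposition in $Q$ together with H\"older's inequality, picking up a harmless $(\log J)^{k-1}$ factor. You instead expand $c_Q(x)=\sum_{d\mid\gcd(Q,x)}\mu(Q/d)\,d$, resum to get $\sum_{Q\leq J}c_Q(x)/Q=\sum_{d\mid x,\,d\leq J}M(J/d)$ with $M(y)=\sum_{m\leq y}\mu(m)/m$, and use the classical bound $|M(y)|\lesssim 1$ to obtain the nonnegative majorant $d_J(x)$; the rest is divisor counting (lcm grouping plus $\tau(L)\lesssim_{\varepsilon'}L^{\varepsilon'}$), and your use of the hypothesis $N>J^k$ to absorb the boundary term $J^k$ is exactly where that hypothesis is needed, with the $x=0$ term also safely covered since $d_J(0)=J$. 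Your approach is self-contained and avoids the external reference entirely, which is a genuine gain. One caveat worth flagging: your Möbius-cancellation step exploits the signs of $\mu$, so it bounds only the signed sum $\bigl|\sum_{Q\leq J}c_Q(x)/Q\bigr|$, which is precisely what the lemma states; the paper's proof via Bourgain actually establishes the stronger variant with $|c_Q(x)|$ inside the sum, and it is that stronger form which is invoked later in the paper (in the low-pass estimates, where $\sum_Q|c_Q(y)|/Q$ appears). So your proof settles the statement as written, but would not by itself support those later applications.
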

\begin{proof}
	We give an outline of the proof.
	Bourgain's estimate \cite[(3.44)]{bourgain93}, tells us that for any two positive integers $J, \, N$
	\begin{align*}
		\left[ \frac{1}{N} \sum _{|x|<N} \left| \sum _{J\leq Q < 2J} |c_Q(x)|\right|^k \right] ^{1/k} 
			&	\lesssim J^{1+ \frac{\varepsilon}{\log 2}}
	\end{align*}
	So let $m_0$ be an integer such that $2^{m_0}\leq J < 2^{m_0+1}$. Using this estimate, we have
	\begin{align*}
		\frac{1}{N} \sum _{|x|<N} \left| \sum _{Q=1} ^J \frac{|c_Q(x)|}{Q}\right|^k
			& \leq (m_0+1)^{k-1} \sum _{m=0}^{m_0} \frac {1}{N} \sum _{|x|<N} 
				\left| \sum _{2^m \leq Q < 2^{m+1}} \frac{|c_q(x)|}{Q} \right| ^k
			\\
			& \lesssim (\log J) ^{k-1} \sum _{m=0}^{m_0} 2^{\varepsilon k m / \log 2}
			\\
			& \lesssim J^{k \varepsilon} 
	\end{align*}
	This completes the proof.
\end{proof}

Let us now turn our attention to the divisor function. The asymptotics of the divisor function are already well known. If $N,n>3$ and $d(n)$ is the divisor function then
\begin{equation} \label{dnestimate}
\log d(n) \lesssim \frac{\log n}{\log \log n} = o(\log n) 
\end{equation}
\vspace*{-0.6cm}
\begin{equation} \label{Dnestimate}
D(N) = N \, \log N +(2 \gamma -1) N + O(\sqrt N) = O(N \log N)
\end{equation}
where $\gamma $ is Euler's constant.

The kernel of our operator has been extensively studied in the number theory literature. We shall be using results discussed in \cite{jut84} and \cite{weber}. 
Particularly, the following is Lemma 5.4 from \cite{weber}, for $P_N = N^{1/2}$ and $Q_N = N^{1- \varepsilon}$. We have also normalized the estimate by dividing by $D(N)$.

\begin{lemma}
	Suppose $N \geq 1$ and $0< \varepsilon \ll 1$.
	Let $\theta \in [0,1]$ be such that for every $ 1 \leq Q \leq \sqrt N$ 
	and every $A \in \mathbb{A}_Q$, $| \theta - A/Q| > N^{-1+\varepsilon}$. 
	Then the following is true.
	\begin{equation} \label{minorarcs}
	\frac{1}{ D(N) } \left| \sum_{1\leq n \leq N} d(n) e(n \theta) \right| = O(N^{-\varepsilon}) 
	\end{equation}
\end{lemma}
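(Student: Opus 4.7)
The plan is to follow the classical approach of Jutila \cite{jut84} (cf.\ also \cite{weber}), combining the Dirichlet hyperbola method with a standard Weyl-type bound for sums of $\min(X, \|d\theta\|^{-1})$. Writing $\sum_{n \leq N} d(n) e(n\theta) = \sum_{dm \leq N} e(dm\theta)$ and splitting the hyperbola region at $\sqrt{N}$ gives
\begin{equation*}
\sum_{n \leq N} d(n)\, e(n\theta) \;=\; 2 \sum_{d \leq \sqrt N}\, \sum_{m \leq N/d} e(dm\theta) \;-\; \sum_{d,\, m \leq \sqrt N} e(dm\theta).
\end{equation*}
Each inner $m$-sum is a finite geometric progression of modulus at most $\min(N/d, \|d\theta\|^{-1})$, so the whole left-hand side is controlled by $\sum_{d \leq \sqrt N}\min(N/d, \|d\theta\|^{-1})$ plus a comparable term at scale $\sqrt N$.

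Second, I would invoke Dirichlet's approximation theorem at denominator $T = N^{1-\varepsilon}$ to obtain coprime $A, Q$ with $1 \leq Q \leq T$ and $|\theta - A/Q| \leq 1/(QT)$. The minor-arc hypothesis rules out $Q \leq \sqrt N$, since in that range it would impose $|\theta - A/Q| > N^{-1+\varepsilon} = 1/T \geq 1/(QT)$, a contradiction. Hence the Dirichlet denominator is forced to lie in the window $\sqrt N < Q \leq N^{1-\varepsilon}$, which is exactly the regime in which $\theta$ has ``large effective denominator''.

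Third, with such $Q$ the quantity $\sum_{d \leq \sqrt N}\min(N/d, \|d\theta\|^{-1})$ is estimated by the standard lemma
\begin{equation*}
\sum_{d \leq D} \min\!\left(X,\, \tfrac{1}{\|d\theta\|}\right) \;\lesssim\; \left(\tfrac{D}{Q}+1\right)\bigl(X + Q \log Q\bigr),
\end{equation*}
valid whenever $|\theta - A/Q| \leq 1/Q^2$ with $(A,Q)=1$. Because $D = \sqrt N < Q$ the prefactor is bounded, and applying the estimate dyadically in $d$ to handle the varying $X = N/d$ gives a total bound of $O\bigl(N^{1-\varepsilon}(\log N)^{O(1)}\bigr)$ for the original sum. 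Dividing by $D(N) \asymp N \log N$ from \eqref{Dnestimate} and absorbing the logarithmic losses into a slightly smaller choice of $\varepsilon$ then yields the claimed $O(N^{-\varepsilon})$.

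The main obstacle is the careful bookkeeping of the varying parameter $X = N/d$ together with the accompanying logarithmic factors: a crude application of the $\min$-sum bound can easily eat the polynomial saving in $N$. Dyadic decomposition in $d$ combined with partial summation, or the sharper formulation of the bound found in \cite{jut84}, is needed to close the gap. Since the stated inequality is precisely Lemma 5.4 of \cite{weber} with parameters $P_N = N^{1/2}$ and $Q_N = N^{1-\varepsilon}$, one may alternatively just invoke that result directly and observe that the normalization by $D(N)$ accounts for the only difference in formulation.
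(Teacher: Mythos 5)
Your fallback observation is exactly what the paper does: the paper offers no proof of this lemma at all, it simply quotes Lemma 5.4 of the cited work of Weber with $P_N=N^{1/2}$, $Q_N=N^{1-\varepsilon}$, and normalizes by $D(N)$ using \eqref{Dnestimate}. So the citation route is fine and coincides with the paper. Your self-contained sketch (hyperbola method, Dirichlet approximation at height $T=N^{1-\varepsilon}$, then the standard bound on $\sum_d \min(\cdot,\|d\theta\|^{-1})$) is also the right classical path, and your second step is correct: the minor-arc hypothesis does force the Dirichlet denominator into the window $\sqrt N < Q \le N^{1-\varepsilon}$.

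However, the third step as you wrote it does not close: applying the fixed-$X$ bound $\sum_{d\le D}\min(X,\|d\theta\|^{-1})\lesssim (D/Q+1)(X+Q\log Q)$ dyadically with $X=N/d\sim N/D_1$ produces the block contributions $N/D_1+Q\log Q$, and summing $N/D_1$ over the dyadic blocks $D_1=1,2,4,\dots,\sqrt N$ gives a term of size $O(N)$ coming from the smallest blocks. After dividing by $D(N)\asymp N\log N$ this is only $O(1/\log N)$, not $O(N^{-\varepsilon})$, so the claimed total of $O(N^{1-\varepsilon}(\log N)^{O(1)})$ does not follow from the argument as stated. The repair is short: since every $d\le \sqrt N$ is strictly less than $Q$ and $d\,|\theta-A/Q|\le \sqrt N/(QN^{1-\varepsilon})\le 1/(2Q)$, one has $\|d\theta\|\ge \tfrac12\|dA/Q\|\ge 1/(2Q)$ for all $d$ in range, so the minimum is always realized by $\|d\theta\|^{-1}$ and the distinctness of the residues $dA \bmod Q$ gives $\sum_{d\le\sqrt N}\|d\theta\|^{-1}\lesssim Q\log Q\lesssim N^{1-\varepsilon}\log N$ in one stroke (equivalently, use the standard lemma in the form $\sum_{d\le D}\min(N/d,\|d\theta\|^{-1})\lesssim (N/Q+D+Q)\log(2DQ)$, which keeps $N/Q$ rather than $N$). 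With that correction the normalization by $D(N)$ indeed yields $O(N^{-\varepsilon})$, with no need to shrink $\varepsilon$.
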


Now let
\begin{equation*}
\widehat \Gamma _N (\theta) 
= \frac{1}{N} \sum _{n=0} ^{N-1} e(n \theta) 
\end{equation*}
denote the Fourier transform of the usual averages,
\begin{equation*}
	\Gamma _N f(x) = \frac{1}{N} \sum _{n=1}^N f(x+n) .
\end{equation*}
 It is a well known fact that these averages satisfy
\begin{equation}\label{gammaestimate}
	|\widehat \Gamma _N (\theta) | \lesssim \min \left\{  1, \frac{1}{N \, |\theta|} \right\} 	
\end{equation}
for $|\theta | < 1/2$.
With that notation we have the following result.

\begin{lemma}\label{majorarcs}
	Suppose $N \geq 2$, $Q \leq \sqrt N$ and $0< \varepsilon \ll 1$.
	Let $\gamma $ denote Euler's constant.
	\\
	There exists a constant $C$ such that for all $A \in \mathbb{A}_Q$,	 
	\stepcounter{equation}
	\begin{equation} \tag{\theequation \- \hspace*{-2.3mm}  a}
	\left|
	\sum _{n \leq N} d(n) e \left( n  A/Q   \right) 
	- 2
	\frac{N \, ( \log N - 2 \log Q + 2 \gamma -1 )}{Q} 
	\right|  \, 
	\leq \, 
	C \, N^{ \frac{2}{3} + \varepsilon}
	\end{equation}
	There exists a constant $C$, such that if $ 0 < \left| \eta \right| < \frac{1}{Q\sqrt N}$, then for all $A \in \mathbb{A}_Q$,
	\begin{equation} \tag{\theequation \- \hspace*{-2.3mm}  b}
	\begin{aligned}
	\left|
	\sum _{n \leq N} d(n) e \Big( n \big( \frac{A}{Q}  + \eta \big)   \Big) 
	- 
	\frac{ e(\eta) - 1}{\pi i \eta} \cdot 
	\frac{N \, (\log N - 2 \log Q + 2 \gamma - 1 ) \,  \widehat \Gamma _N (\eta) }{Q} 
	\right|  \, 
	\leq \, 
	C \, N^{ \frac{2}{3} + \varepsilon}
	\end{aligned}
	\end{equation}
	There exists a constant $C$, such that if $ 0 < \left| \theta \right| < \frac{1}{Q\sqrt N}$, then
	\begin{equation} \tag{\theequation  \- \hspace*{-2.3mm}  c}
	\left| 
	\sum _{n \leq N} d(n) e \left( n  \theta   \right) 
	-
	\frac{ e(\theta) - 1}{\pi i \theta} \cdot
	\frac{N}{Q} \, 
	(\log N + 2 \gamma -1) \,  \widehat \Gamma _N (\theta)
	\right|  \,  
	\leq 
	C \,
	N^{ \frac{2}{3} + \varepsilon}
	\end{equation}
\end{lemma}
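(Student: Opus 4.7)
The plan is to prove Part (a) as the analytic core, and then derive (b) and (c) from (a) by Abel summation in the frequency variable.

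For (a), I would apply the Dirichlet hyperbola identity to obtain
\begin{equation*}
\sum_{n \le N} d(n) e(nA/Q) \;=\; 2 \sum_{d \le \sqrt N}\sum_{m \le N/d} e(Adm/Q) \;-\; \sum_{d,\, m \le \sqrt N} e(Adm/Q).
\end{equation*}
The inner geometric sums evaluate to $\lfloor N/d\rfloor$ when $Q \mid d$ (the ``diagonal'') and to $O(Q/\gcd(d,Q))$ otherwise, using that $(A,Q)=1$. Summing the diagonal contributions and inserting the harmonic-sum asymptotic $\sum_{k \le \sqrt N/Q} 1/k = \frac{1}{2}\log N - \log Q + \gamma + O(Q/\sqrt N)$ gives a main term of the stated shape, while a matching computation for the correction term $\sum_{d,m\le\sqrt N}$ supplies the ``$-1$'' in $(\log N - 2\log Q + 2\gamma -1)$. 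The off-diagonal terms produce only a naive error of size $O(Q\sqrt N)$, which is too weak. To sharpen the error to $O(N^{2/3+\varepsilon})$ I would invoke the Voronoi summation formula, which recasts the off-diagonal remainder as a dual sum weighted by Bessel functions $Y_0$ and $K_0$; bounding this dual sum is the content of the classical subconvexity estimates for $d(n)$-twisted exponential sums developed by Jutila \cite{jut84}. This is the main technical step and the real obstacle of the lemma.

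For (b), I would fix $A/Q$ with $Q \le \sqrt N$, factor $e(n\theta) = e(nA/Q)\,e(n\eta)$, and perform Abel summation using the partial sums $S_M = \sum_{n \le M} d(n) e(nA/Q)$, each of which is controlled by (a). The main term propagates through the summation while the Fourier-type prefactor $\frac{e(\eta)-1}{\pi i \eta}$ and the averaging kernel $\widehat \Gamma _N(\eta)$ emerge naturally from the identity $e((M+1)\eta) - e(M\eta) = (e(\eta)-1)e(M\eta)$ paired against the linear-in-$M$ main term. The constraint $|\eta| < 1/(Q\sqrt N)$ is precisely what is needed so that the cumulative error, essentially $\sum_{M \le N} M^{2/3+\varepsilon}\,|\eta|$, stays within the allowed $N^{2/3+\varepsilon}$. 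For (c), there is no rational approximation to exploit, so instead I would apply Abel summation directly with $D(M)$ as the partial sum, substituting the asymptotic $D(M) = M(\log M + 2\gamma - 1) + O(\sqrt M)$ from \eqref{Dnestimate}. The $O(\sqrt M)$ errors combine with the finite differences $e((M+1)\theta)-e(M\theta)$ into a total contribution that is comfortably absorbed into $N^{2/3+\varepsilon}$, and the product of the main term of $D(M)$ against $e(n\theta)$ assembles into the claimed expression.

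The single real obstacle is therefore the sharp error bound in (a), which requires Voronoi summation together with nontrivial oscillatory Bessel-sum estimates. Once (a) is available uniformly in $Q \le \sqrt N$, the passage to (b) and (c) is bookkeeping: tracking how Abel summation against $e(n\eta)$ produces the Fourier prefactors $\frac{e(\eta)-1}{\pi i \eta}$ and $\widehat \Gamma _N(\eta)$, and verifying that the smallness condition $|\eta|<1/(Q\sqrt N)$ keeps all error contributions below $N^{2/3+\varepsilon}$.
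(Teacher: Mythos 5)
The paper does not really argue this lemma at all: it is read off directly from Theorem 5 of Jutila \cite{jut84} (together with his Remarks 1, 2 and equation (1.4)), which treats the perturbed frequencies $A/Q+\eta$ directly; only the identity $\sum_{n=0}^{N-1}e(n\eta)=\frac{e(N\eta)-1}{e(\eta)-1}$ is used to put the main term in the stated form. Your plan of deducing (b) and (c) from the unperturbed case (a) by Abel summation is exactly the step that fails. Partial summation converts the scale-$M$ errors of (a) into a total of order $N^{2/3+\varepsilon}+|\eta|\sum_{M\le N}M^{2/3+\varepsilon}\approx N^{2/3+\varepsilon}\,(1+N|\eta|)$, and in the stated range $N|\eta|$ may be as large as $\sqrt N/Q$; the resulting bound $N^{7/6+\varepsilon}/Q$ is $\lesssim N^{2/3+\varepsilon}$ only when $Q$ is essentially of size $\sqrt N$. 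For $Q=2$, say, it is about $N^{7/6}$, far above the target, so your claim that $|\eta|<1/(Q\sqrt N)$ is ``precisely what is needed'' is an arithmetic slip: the condition that would make this bookkeeping close is $|\eta|\lesssim 1/N$. The main term has the same defect: summing by parts the main terms of (a) yields $\frac1Q\sum_{M\le N}(\log M-2\log Q+2\gamma)e(M\eta)$, and trading the interior weight $\log M$ for the constant $\log N-1$ against the oscillation costs about $\frac1Q\bigl|\int_0^N\bigl(\log(t/N)+1\bigr)e(\eta t)\,dt\bigr|$, which is of size comparable to $N^2|\eta|/Q$ when $N|\eta|\lesssim 1$ and can reach size $N/Q$ near $|\eta|\asymp 1/N$; this is not $O(N^{2/3+\varepsilon})$ for small $Q$. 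The same two objections apply verbatim to your treatment of (c), where $|\theta|\sum_{M\le N}\sqrt M\approx|\theta|N^{3/2}$ can be as large as $N/Q$. (A further small issue: (a) requires $Q\le\sqrt M$, so it cannot be invoked at the initial scales $M<Q^2$ of the Abel sum.) In short, the $\eta\ne0$ statements must be proved with the factor $e(\eta t)$ carried inside the hyperbola/Voronoi analysis, which is what Jutila's Theorem 5 does; they cannot be recovered from (a) by partial summation in the full range $|\eta|<1/(Q\sqrt N)$.

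Two smaller points about (a) itself. The off-diagonal terms do not need Voronoi summation at the error level claimed here: grouping $d\le\sqrt N$ into residue classes mod $Q$ and using $\sum_{0<r<Q}\|Ar/Q\|^{-1}\ll Q\log Q$ bounds them by $O(\sqrt N\log N)$, already well inside $N^{2/3+\varepsilon}$, so what you call the ``main technical step'' is in fact elementary, while the genuine difficulty of the lemma sits in parts (b) and (c), which you classified as bookkeeping. Also, your hyperbola computation produces the main term $\frac NQ(\log N-2\log Q+2\gamma-1)$, which is half of the displayed quantity $2\frac NQ(\log N-2\log Q+2\gamma-1)$; before asserting that you obtain ``the stated shape'' you would need to reconcile your constant with the normalization the paper imports from Jutila.
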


Lemma \ref{majorarcs} is a direct result of Theorem 5 of \cite{jut84}, along with Remarks 1, 2, and equation (1.4) of the same paper, as well as the fact that
\begin{align*}
	\sum _{n=0}^{N-1}  e(-n \eta ) = \frac{e(N \eta) - 1}{e(\eta) - 1}. \\
\end{align*}

\begin{remark}\label{majornormalization}
	We point out that Lemma \ref{majorarcs} is an estimate of the sum before normalization. Therefore, using \eqref{Dnestimate},  the implied estimate for the differences is of the order $N^{-\frac{1}{3} + \varepsilon}$.
\end{remark}

\begin{remark} \label{expofunctionerror}
	If we look at the Taylor expansion of $\frac{ e(\theta) - 1}{\pi i \theta}$, we can easily see that 
	\begin{equation*}
		\left| \frac{ e(\theta) - 1}{\pi i \theta} - 2 \right| \lesssim |\theta|.
		\\
	\end{equation*}
\end{remark}

\begin{remark} \label{bddnormalization}
	Note that by \eqref{Dnestimate} we have that 
	$| N (\log N + 2 \gamma - 1) -D(N)| \lesssim \sqrt N $.
\end{remark}

\medskip 

\section{Approximating Multipliers}

We consider a Schwartz function $\chi$ that satisfies $ \one _{[-\frac{1}{8}, \frac{1}{8}]} \leq \chi \leq \one _{[-\frac{1}{4}, \frac{1}{4}]}$, and its dilations $\chi _s (\theta) = \chi (2^{3s} \theta)$.  Now consider the following multipliers, for $s \geq 1$ and $2^s \leq Q < 2^{s+1}$.
\vspace{-0.3cm} 

\begin{subequations}\label{approxmultipliers}
	\begin{align}
		\widehat L_{1,N} (\theta)  
			& = 2 \,  \widehat \Gamma _N (\theta)  \chi _1 (\theta)
		\\
		\widehat L_{Q,N} (\theta)  
			&= 	\frac{2}{Q} \sum _{A \in \mathbb A_Q}  \,  \widehat \Gamma _N (\theta - A/Q) \chi _s (\theta - A/Q) 
	\end{align}
\end{subequations}

Using the definition of Ramanujan sums $c_Q(x)$ in \eqref{ramanujandefinition}, we calculate the inverse Fourier transform of our multipliers.

\begin{lemma} \label{inverseftofL}
	With the definition of \eqref{approxmultipliers} we have
	\begin{subequations} \label{inversemultipliers}
		\begin{align}
			L_{1,N} (x)  &=  2  \, \Gamma _N * \check \chi _1 (x)
			\\
			L_{Q,N} (x) &= \dfrac{2}{Q} \, c_Q(x)  \, \big(  \Gamma _N * \check \chi _s \big) (x)
		\end{align}
	\end{subequations}
\end{lemma}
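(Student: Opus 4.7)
The plan is to verify both identities by direct Fourier inversion, using two standard facts: (i) a product of Fourier transforms corresponds to a convolution on the spatial side, and (ii) a translation in frequency corresponds to modulation in the spatial variable. Since $\widehat \Gamma_N$ and $\chi_s$ are both bounded and smooth enough (the latter is Schwartz, hence $\check\chi_s$ is well-defined and Schwartz), the manipulations below are justified termwise.

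For the first identity, $\widehat L_{1,N}(\theta) = 2\, \widehat \Gamma_N(\theta)\,\chi_1(\theta)$ is a product in frequency, so $\mathcal F^{-1}(\widehat\Gamma_N\,\chi_1) = \Gamma_N * \check \chi_1$ by the convolution theorem, yielding $L_{1,N}(x) = 2\,\Gamma_N * \check\chi_1(x)$.

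For the second identity, fix $A \in \mathbb A_Q$ and write $\psi_{A}(\theta) := \widehat\Gamma_N(\theta - A/Q)\,\chi_s(\theta - A/Q)$, which is the translate by $A/Q$ of the function $\widehat\Gamma_N(\theta)\chi_s(\theta)$. Applying the modulation rule,
\begin{equation*}
\mathcal F^{-1}\psi_A(x) = e(Ax/Q)\cdot \mathcal F^{-1}\bigl(\widehat\Gamma_N\cdot\chi_s\bigr)(x) = e(Ax/Q)\cdot (\Gamma_N * \check\chi_s)(x).
\end{equation*}
Summing over $A \in \mathbb A_Q$ and pulling the spatial factor $(\Gamma_N * \check \chi_s)(x)$ outside the sum, the remaining character sum $\sum_{A \in \mathbb A_Q} e(Ax/Q)$ is precisely $c_Q(x)$ by the definition \eqref{ramanujandefinition}. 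Multiplying through by $2/Q$ gives the claimed formula for $L_{Q,N}$.

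There is no real obstacle here: the argument is a direct bookkeeping exercise with translations, modulations, and the definition of the Ramanujan sum. The only points deserving a brief check are that $\chi_s$ being Schwartz makes $\check\chi_s$ well-defined so the convolution $\Gamma_N * \check\chi_s$ is a legitimate spatial function, and that the disjointness of the supports of $\chi_s(\theta - A/Q)$ (for $s$ as in the definition) is not actually needed for the inversion itself — it only matters when these multipliers are later estimated.
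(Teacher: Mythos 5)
Your proposal is correct and follows essentially the same route as the paper: the paper's proof also computes the inverse transform directly, shifting the integration variable by $A/Q$ to produce the modulation factor $e(xA/Q)$ times $\Gamma_N * \check\chi_s$, and then sums over $A \in \mathbb A_Q$ to recognize the Ramanujan sum $c_Q(x)$. Nothing further is needed.
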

\begin{proof}	
	We compute
	\begin{align*}
		L_{1,N} (x)
			& = 2 \int _\T  \widehat \Gamma _N (\theta)  \chi _1 (\theta) \, e(x \theta ) \, d \theta  
			  = 2 \Gamma _N * \check \chi _1 
			\\[0.2cm] 
		L_{Q,N} (x) 
			& = \dfrac{2}{Q} \, 
			\sum _{A \in \mathbb A_Q} 
			\int _\T \, 
			\widehat \Gamma _N  (\theta - A/Q) \,  \chi _s (\theta - A/Q)  \, e(\chi \theta) 
			\, d\theta 
			\\
			& = \dfrac{2}{Q} \,
			\sum _{A \in \mathbb A_Q} e( xA/Q) 
			\int _\T \,
			\, \widehat \Gamma _N  (\theta ) \,  \chi _s (\theta)  \, e(\chi \theta) \, d\theta 
			\\
			& = \dfrac{2}{Q} \,
			\sum _{A \in \mathbb A_Q} e( xA/Q) \, \Gamma _N * \check \chi _s 
			\\
			& = \dfrac{2}{Q} \, c_Q(x)  \, \Gamma _N * \check \chi _s 
	\end{align*}
\end{proof}

\begin{thm} \label{remainder}
	Let  $N >3 $ and $0< \varepsilon < 1/2$. If $1 < P \leq \sqrt N$ then
	\begin{equation}\label{remaindereq}
		\widehat K_N = \sum _{Q=1}^{P} \widehat L_{Q,N } \, + \widehat r_{N,P},
	\end{equation}
	where $\| \widehat r_{N,P} \| _{\infty} \lesssim P^{- \varepsilon} $.
\end{thm}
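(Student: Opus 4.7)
The strategy is to bound $|\widehat r_{N,P}(\theta)|$ pointwise on $[0,1)$. Given $\theta$, I select a Dirichlet approximation, i.e. coprime $(A^*,Q^*)$ with $1\le Q^*\le\sqrt N$ and $|\eta^*|:=|\theta-A^*/Q^*|\le 1/(Q^*\sqrt N)$. Since this falls within the hypothesis of Lemma \ref{majorarcs}(b), applying that lemma and then dividing by $D(N)$ using the asymptotic \eqref{Dnestimate}, Remark \ref{majornormalization}, and the Taylor bound $|\tfrac{e(\eta)-1}{\pi i\eta}-2|\lesssim|\eta|$ from Remark \ref{expofunctionerror}, I arrive at an expansion of the form
\begin{equation*}
\widehat{K_N}(\theta)=\frac{2\widehat{\Gamma}_N(\eta^*)}{Q^*}\left(1-\frac{2\log Q^*}{\log N}\right)+O(N^{-1/3+\varepsilon})+O\!\left(\frac{|\eta^*|}{Q^*}+\frac{1}{Q^*\sqrt N}\right).
\end{equation*}
This expansion will then be compared against $\sum_{Q=1}^P\widehat L_{Q,N}(\theta)$ in two cases, using the fact that each $\widehat L_{Q,N}$ is supported only in the narrow arcs of scale $2^{-3s}/4\lesssim 1/Q^3$ around rationals with denominator $Q$.

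If $Q^*\le P$, the term $\widehat L_{Q^*,N}(\theta)=\frac{2\widehat\Gamma_N(\eta^*)}{Q^*}\chi_{s^*}(\eta^*)$ is the relevant contribution to the partial sum, while any other $\widehat L_{Q',N}(\theta)$ with $(A',Q')\ne(A^*,Q^*)$ can be ruled out or estimated via the arithmetic separation $|A^*/Q^*-A'/Q'|\ge 1/(Q^*Q')$ versus the arc radius, combined with the decay $|\widehat\Gamma_N(\cdot)|\lesssim 1/(N|\cdot|)$. Where $\chi_{s^*}(\eta^*)=1$, the residual reduces to $-\frac{4\widehat\Gamma_N(\eta^*)\log Q^*}{Q^*\log N}$, and the elementary inequality $\log x\cdot x^{\varepsilon-1}\le C_\varepsilon$ for $x\ge 1$, together with $P\le\sqrt N$, gives $\frac{\log Q^*}{Q^*\log N}\lesssim_\varepsilon P^{-\varepsilon}$. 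Where $\chi_{s^*}(\eta^*)<1$, one must have $|\eta^*|\ge 2^{-3s^*}/8\gtrsim 1/(Q^*)^3$, which combined with $|\eta^*|\le 1/(Q^*\sqrt N)$ forces $Q^*\gtrsim N^{1/4}$ and hence $1/Q^*\lesssim N^{-1/4}\lesssim P^{-\varepsilon}$ for $\varepsilon\le 1/2$. If instead $Q^*>P$, then $\widehat L_{Q^*,N}$ does not enter the sum; the main term of $\widehat{K_N}(\theta)$ has size $\lesssim 1/Q^*\le 1/P\le P^{-\varepsilon}$, and any secondary $\widehat L_{Q',N}(\theta)$ with $Q'\le P$ that might appear because $\theta$ lies in a nearby arc is bounded using $|\theta-A'/Q'|\ge|A^*/Q^*-A'/Q'|-|\eta^*|\gtrsim 1/(Q'Q^*)$ and the decay of $\widehat\Gamma_N$, giving each such contribution size $\lesssim Q^*/N\le 1/\sqrt N$.

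The central obstacle is the uniform control of the logarithmic correction $\log Q^*/(Q^*\log N)$ across the entire range $1\le Q^*\le P\le\sqrt N$: the elementary calculus bound above is precisely what converts this into the target quantity $P^{-\varepsilon}$. Additional care is needed to rule out or estimate simultaneous membership of $\theta$ in multiple arcs (possible only when $Q'\gtrsim(Q^*)^{1/2}$), handled systematically via the separation of distinct rationals in lowest terms; and within the transition region of $\chi_{s^*}$ it is the Dirichlet lower bound on $|\eta^*|$ that forces $Q^*$ to be large enough for the bound to close.
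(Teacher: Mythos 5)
Your overall architecture is close to the paper's but genuinely reorganized: in place of the paper's three-case split (major arcs handled by Lemma \ref{majorarcs}, minor arcs by the exponential-sum bound \eqref{minorarcs}), you use Dirichlet approximation to place every $\theta$ within $1/(Q^*\sqrt N)$ of some $A^*/Q^*$ with $Q^*\le\sqrt N$, so that Lemma \ref{majorarcs}(b) applies at every $\theta$ and \eqref{minorarcs} is never invoked; off-arc contributions are then controlled by \eqref{gammaestimate} and the spacing of reduced fractions. That skeleton is fine (one small slip: $\theta$ can lie simultaneously in the arc of a different $A'/Q'$ only when $Q'\lesssim\sqrt{Q^*}$, not $Q'\gtrsim\sqrt{Q^*}$, though your use of this does not depend on the direction).

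There is, however, a genuine gap at the step where you dispose of the logarithmic correction. On the set where $\chi_{s^*}(\eta^*)=1$, your expansion correctly identifies the residual as $\tfrac{4\log Q^*}{Q^*(\log N+2\gamma-1)}\,\widehat\Gamma_N(\eta^*)$ up to $O(N^{-1/3+\varepsilon})+O(|\eta^*|)$ errors, but the claimed bound $\tfrac{\log Q^*}{Q^*\log N}\lesssim_\varepsilon P^{-\varepsilon}$ is false in the range you need: the elementary inequality only yields $\tfrac{\log Q^*}{Q^*\log N}\le C_\varepsilon\,(Q^*)^{-\varepsilon}/\log N$, and $Q^*$ is dictated by $\theta$, not by $P$. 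Taking $\theta$ within, say, $N^{-2}$ of $1/2$ forces $Q^*=2$ and $\widehat\Gamma_N(\eta^*)=1+o(1)$, so the residual is of size $\asymp 1/\log N$, which is far larger than $P^{-\varepsilon}$ as soon as $P$ is a fixed power of $N$ --- and the theorem is applied later with $P$ as large as $N^{1/p'}$ or $\sqrt N$. Moreover this cannot be repaired by a sharper calculus inequality: with the multipliers exactly as in \eqref{approxmultipliers}, the mismatch between the local factor $\tfrac2Q\bigl(1-\tfrac{2\log Q}{\log N}\bigr)$ furnished by Lemma \ref{majorarcs}(b) and the coefficient $\tfrac2Q$ in $\widehat L_{Q,N}$ really is of order $\log Q/(Q\log N)$. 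For comparison, the paper's proof absorbs this same correction in the display preceding \eqref{restpart1}, where Remark \ref{bddnormalization} is invoked to assert an $O(N^{-1/2})$ bound; reconciling that display with your expansion --- in particular tracking the factor of $N$ multiplying $\log Q$ there --- is precisely the issue your write-up exposes. To close your argument at the stated strength one has to change what is being compared (for instance, incorporate the factor $1-\tfrac{2\log Q}{\log N}$ into the definition of $\widehat L_{Q,N}$), not merely estimate the leftover term more cleverly.
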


\begin{proof}
	The first thing we need to note is that, by construction, our multipliers $\widehat L _{Q,N}$ are supported on disjoint intervals, centered at rationals $A/Q$, for $2^s \leq Q < 2^{s+1}$, $s\geq 1$, and $A \in \mathbb A_Q.$  Using \eqref{gammaestimate}, we have that
	\begin{equation}\label{roughestimateL}
		\left\| \sum _{2^s \leq Q < 2^{s+1}} \widehat L _{Q,N} \right\| _\infty \lesssim 2^{-s} 	
	\end{equation}
	
	We first show the theorem holds when $P = \sqrt N $.
	Let us fix $\varepsilon >0$ and $\theta \in \T$. We distinguish three cases.
	
	\textit{First Case.}
	Suppose there exists $Q \leq \sqrt N$, such that for some $A \in \mathbb A_Q$, $\theta $ satisfies $|\theta - A/Q| < N^{-1+\varepsilon}$. Then for any other $B \in \mathbb A_Q$ with $B\neq A$, we have $\chi _s (\theta - B/Q) = 0$, while $\chi_s (\theta - A/Q) = 1$. This means that all the other terms in the sum of our approximating multipliers vanish, and we are only left with
	\begin{equation*}
		\widehat L _{Q,N} (\theta) = \dfrac{2}{Q} \,  \widehat \Gamma _N (\theta - A/Q)
	\end{equation*}
	Notice that using \eqref{gammaestimate}, and Remarks \ref{majornormalization},  \ref{expofunctionerror} and \ref{bddnormalization}, we see that	
	\begin{align*}
		\Big| \widehat L_{Q,N} (\theta)  
		 - \frac{ e(\theta - A/Q) - 1}{\pi i (\theta - A/Q)} \cdot 
		 & 
		\frac{N \, (\log N - 2 \log Q + 2 \gamma - 1 ) }{Q \cdot D(N)} 
		\,  \widehat \Gamma _N (\theta - A/Q) \Big| 
		\\
		& \lesssim |\theta - A/Q| + 2 \frac{2\log Q + \sqrt N}{Q \, D(N)} 
		\\
		& \lesssim   N^{-1/2} 
	\end{align*}
	Using this estimate,  part (b) of Lemma \ref{majorarcs} and the triangle inequality, 
	\begin{equation}\label{restpart1}
		|\widehat K_N (\theta) - \widehat L_{Q,N} (\theta) | \lesssim N^{-\frac{1}{3} + \varepsilon }.
	\end{equation}
	For the rest of the $Q'\leq \sqrt N$, $Q' \neq Q$, we have that for all $A' \in \mathbb A_{Q'} $, $|\theta - A'/Q'| > N^{-1 +\varepsilon} $, so using \eqref{gammaestimate} we get
	\begin{equation*}
		\left| \widehat L_{Q',N} (\theta) \right| \lesssim \frac{N^{1-\varepsilon}}{Q'\, N }
	\end{equation*}
	Summing over all such $Q'$, we get
	\begin{equation}\label{restpart2}
		\sum_{ \substack{ 1\leq Q' \leq \sqrt N \\ Q'  \neq Q}  } \left| \widehat L_{Q',N} (\theta) \right| \lesssim N^{- \varepsilon} 
	\end{equation}
	Using \eqref{roughestimateL}, \eqref{restpart1}, and \eqref{restpart2} we can get the desired bound for this choice of $\theta$. It is worth mentioning that the value of $\varepsilon$ might change from one line to the next, but as always it denotes a small positive number.

	\textit{Second Case.} 
	This is the case where $|\theta | \leq N^{-1+\varepsilon}$. This case works exactly like the previous one, except we use part (a) of the definition of the multipliers, equation \eqref{approxmultipliers}, and parts (a) and (c)  of the estimate in Lemma \ref{majorarcs}.

	\textit{Third Case.}
	Our last case is when $\theta$ does not meet any of the criteria of the previous two cases. This estimate is similar to our estimate of \eqref{restpart2}. Indeed, using \eqref{gammaestimate}, and the fact that now $|\theta -A/Q|> N^{-1+\varepsilon} $ for all choices of $Q\leq \sqrt N$ and $A \in \mathbb A_Q$, we get that 
	\begin{equation}\label{restpart3}
		\sum_{ 1\leq Q \leq \sqrt N  } \left| \widehat L_{Q,N} (\theta) \right| 
		\lesssim 
		\sum_{ 1\leq Q \leq \sqrt N  } \frac{N^{-\varepsilon}}{Q}
		\lesssim N^{- \varepsilon} 
	\end{equation}
	This, combined with \eqref{minorarcs} concludes this case.
	This implies that $\widehat K_N = \sum _{Q=1}^{\sqrt N } \widehat L_{Q,N } \, + \widehat r_{N,\sqrt N },$ 
	where $\| \widehat r_{N,\sqrt N } \| _{\infty} \lesssim (\sqrt N)^{- \varepsilon} $.
	
	Now suppose that $P < \sqrt N$. Then we see that $ \widehat K_N = \sum _{Q=1}^{P} \widehat L_{Q,N } \, + \widehat r_{N,P},$ where 
	\vspace*{-1em}
	\begin{equation*}
		\widehat r_{N,P} = \sum _{Q=P+1}^ {\sqrt N} \widehat L_{Q,N } +  \widehat r_{N, \sqrt N}
	\end{equation*}
	Using the same argument as in \eqref{roughestimateL}, we see that 
	\begin{align*}
		\left \| \sum _{Q=P+1}^ {\sqrt N} \widehat L_{Q,N } \right \| _{\infty} 
		\leq \max _{P+1 \leq Q \leq N} |\widehat L _{Q,N} | 
		\leq \frac{1}{P+1} 
		\lesssim P^{-\varepsilon}.
	\end{align*}
	This, combined with the fact that $ \| \widehat r _{N, \sqrt N} \| _ \infty \leq (\sqrt N) ^{- \varepsilon } \lesssim P^{-\varepsilon}$ completes the proof.
\end{proof}

\medskip 

\section{Fixed Scale}

We will now focus on a fixed scale estimate, so let us fix an $N$, let $E$ be an interval such that  $|E|=N$ and fix $p \in (1,2)$. Our goal is to show that if $f = \one _F $ is supported on $E$ and $g=\one _G$ is supported on $E$ then \begin{equation}\label{fixedscale}
\frac{1}{N} (K_Nf,g) \leq C_p \,  \langle f \rangle _{E,p} \, \langle g \rangle _{E,p},
\end{equation}
where $C_p$ is positive constant independent of $N$. The proof of Theorem \ref{lpimprovingthm} follows immediately by an additional elementary argument.

Notice that it is trivial to obtain the following bound using \eqref{dnestimate}: 
\begin{equation} \label{trivial-estimate-eq}
	\frac{1}{N} (K_Nf,g) \lesssim \, 2 N^{\delta} \,  \langle f \rangle _{E,1} \, \langle g \rangle _{E,1},
\end{equation}
for a fixed $0< \delta < \frac{(p-1)^2}{p}$. While this seems an arbitrary choice, it is a bound that is required in the proof of Lemma \ref{aux}. Note that for $p \in (1,2)$ this is a reasonable bound for $\delta$, and highlights how the endpoint $p=1$ would be problematic, as it would require $\delta =0$. 
This immediately implies that if 
\begin{equation*}
2 \,  N^{\delta} \, \langle f \rangle _{E,1} ^{1/p'} \, \langle g \rangle _{E,1} ^ {1/p'} \leq 1
\end{equation*}
then our fixed scale estimate holds, in which case we are done. So suppose the result fails. That means  both of the following conditions are true
\begin{align} \label{negation}
	\langle f \rangle _{E,1} \geq 2^{-p'}  N^{- \delta \, p'}
	&&
	\langle g \rangle _{E,1}  \geq 2^{-p'}  N^{ -\delta \, p'}.
\end{align}

We shall show that \eqref{fixedscale} still holds by using an auxiliary result, the high-low decomposition. While the idea for this decomposition has been around for a long time, our inspiration stems from \cite{Bourgain99}.

\begin{lemma}\label{aux}
	Let $p \in (1,2)$. Then there exists $N_p>0$ such that for all $N>N_p$ and  $1 \leq m \leq N ^{1/p'}$ we can decompose 
	$K_N f = H_m + L_m $ where
	\begin{align*}
		&\langle H_m \rangle _{E,2}  \lesssim m^{-1/p} \, \langle f \rangle ^{1/2}_{E,1} \\
		&\langle L_m \rangle _{E, \infty}  \lesssim m^{1/p'} \, \langle f \rangle _{E,1}^{1/p} 
	\end{align*}
\end{lemma}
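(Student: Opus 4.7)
The plan is to adopt the natural high-low split suggested by Theorem \ref{remainder}: set
\begin{equation*}
L_m = \sum_{Q=1}^{m} L_{Q,N} * f, \qquad H_m = K_N f - L_m,
\end{equation*}
so that $\widehat{H_m} = \widehat{r}_{N,m}\,\widehat{f}$. For the $\ell^2$ bound on $H_m$ I would invoke Plancherel. The bound stated in Theorem \ref{remainder} is a touch loose; reading its proof one actually obtains $\|\widehat r_{N,m}\|_{\infty} \lesssim (m+1)^{-1} + N^{-\varepsilon/2}$ for $m \le \sqrt N$. The first summand is trivially $\le m^{-1/p}$, and the second is $\le m^{-1/p}$ provided one chooses $\varepsilon \ge 2/(pp') = 2(p-1)/p^{2}$; this is compatible with the constraint $\varepsilon < 1/2$ precisely because $p \in (1,2)$ makes $2/(pp') < 1/2$. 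Plancherel then yields $\|H_m\|_{\ell^2} \le \|\widehat r_{N,m}\|_\infty \|f\|_{\ell^2} \lesssim m^{-1/p}(N\langle f\rangle_{E,1})^{1/2}$, and dividing by $\sqrt{|E|}=\sqrt N$ gives the desired bound on $\langle H_m \rangle_{E,2}$.

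For the $\ell^\infty$ bound on $L_m$ I would use the explicit kernel from Lemma \ref{inverseftofL}. Grouping $Q$ by dyadic blocks $2^{s-1} \le Q < 2^s$, on which the smoothing factor $h_s := \Gamma_N * \check \chi _s$ no longer depends on $Q$, and setting $\tilde c_s := \sum_{Q \in [2^{s-1}, 2^s) \cap [1,m]} (2 c_Q)/Q$, one has
\begin{equation*}
L_m(x) = \sum_{s=0}^{\lceil \log_2 m\rceil} h_s(x)\, \tilde c_s(x).
\end{equation*}
Hölder in $y$ with exponents $(p,p')$ and the triangle inequality in $s$ reduce the task to controlling $\sum_s \|h_s \tilde c_s\|_{\ell^{p'}}$. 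Young's inequality gives $\|h_s\|_\infty \lesssim \min(1/N,\, 2^{-3s})$, and Lemma \ref{rsumsestimate} applied to the dyadic piece $\tilde c_s$ (with $k = p'$ and $J = 2^s$; the hypothesis $N > J^{p'}$ is guaranteed by $m \le N^{1/p'}$) delivers $(\frac{1}{N}\sum_{|z|<N} |\tilde c_s|^{p'})^{1/p'} \lesssim 2^{s\varepsilon}$. Combining these with the fact that $h_s$ is essentially supported on an interval of length $\max(N, 2^{3s})$ (with rapidly decaying Schwartz tails) gives $\|h_s \tilde c_s\|_{p'} \lesssim N^{-1/p}\,2^{s\varepsilon}$ in the primary regime $2^{3s} \le N$, and a comparable estimate in the secondary regime $2^{3s} > N$ (only relevant when $m \ge N^{1/3}$; the larger effective support is offset by $\|h_s\|_\infty \lesssim 2^{-3s}$). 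Summing the geometric series in $s$ and multiplying by $\|f\|_p = N^{1/p} \langle f\rangle_{E,1}^{1/p}$ produces $|L_m f(x)| \lesssim m^\varepsilon \langle f\rangle_{E,1}^{1/p} \le m^{1/p'} \langle f\rangle_{E,1}^{1/p}$ once $\varepsilon < 1/p'$.

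The hard part will be the low-pass step: both dyadic regimes for $h_s$ must be treated uniformly, and Lemma \ref{rsumsestimate} must be shown to apply at every scale $s \le \log_2 m$. The hypothesis $m \le N^{1/p'}$ is used exactly to enforce the arithmetic condition $N > (2^s)^{p'}$. The restriction $p<2$ is genuine: it enters through $2/(pp') < 1/2$ in the high-pass estimate, and through $\varepsilon < 1/p'$ in the low-pass summation, both of which degenerate as $p \to 2$, which is also reflected in the threshold $N_p$ and in the need for the negation \eqref{negation} to pre-dispose of trivial $f$ via \eqref{trivial-estimate-eq}.
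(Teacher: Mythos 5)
Your proposal is correct and follows essentially the same route as the paper: the high/low split $H_m=\F^{-1}(\widehat r_{N,m}\,\widehat f)$, $L_m=\sum_{Q\le m}L_{Q,N}*f$ coming from Theorem \ref{remainder}, Plancherel for the high term, and Lemma \ref{inverseftofL} together with H\"older and Lemma \ref{rsumsestimate} for the low term (and your refined reading $\|\widehat r_{N,m}\|_\infty\lesssim (m+1)^{-1}+N^{-\varepsilon/2}$, combined with $m\le N^{1/p'}$, is a legitimate way to reach $m^{-1/p}$ within the constraint $\varepsilon<1/2$). The only real deviation is that your dyadic-in-$s$ support analysis of $h_s=\Gamma_N*\check\chi_s$ is unnecessary: since $f$ is supported on $E$ and the supremum is taken over $x\in E$, only $|y|\lesssim N$ contributes, so the paper just uses the uniform bound $|\Gamma_N*\check\chi_s|\lesssim N^{-1}$, truncates the $y$-sum at $2N$, and applies Lemma \ref{rsumsestimate} once to the full sum $\sum_{Q\le m}|c_Q(y)|/Q$ with $k=p'$ and $\varepsilon=1/p'$, avoiding the two-regime and Schwartz-tail bookkeeping entirely.
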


\begin{remark}
	We can explicitly define $N_p$ as the smallest positive integer that satisfies
	\begin{equation} \label{npineq} 
		2 \leq N^{ \frac{(p-1)^2}{p}-\delta }
	\end{equation}
	for the same value of $\delta > 0$ that we have chosen for our trivial estimate in equation \eqref{trivial-estimate-eq}.
\end{remark}

Using this lemma we get that 
\begin{equation*}
	\frac{1}{N} (K_N f , g ) \lesssim m^{-1+\frac{1}{p'}} \, (\langle f \rangle _{E,1} \langle g \rangle _{E,1})^{1/2} + m^{1/p'} \, \langle f \rangle _{E,1} ^{1/p} \langle g \rangle _{E,1}
\end{equation*}
Optimizing over $m$, that is $m \sim \langle f \rangle _{E,1} ^{\frac{1}{2} - \frac{1}{p}} \, \langle g \rangle _{E,1} ^{-\frac{1}{2}}$, and plugging this back to our estimate completes the proof of \eqref{fixedscale}.
We point out that this value of $m$ is an allowed choice within range thanks to \eqref{negation}, \eqref{npineq}, and the way we have chosen $\delta$. Indeed
\begin{align*}
	m & \leq (2^{-p'} N^{-\delta p'} )^{-\frac{1}{p} }
	\leq N^{\frac{p-1}{p} -\frac{\delta}{p-1} + \frac{\delta}{p-1}}
	= N^{ \frac{1}{p'} } .
\end{align*}
Thus all we are missing is the proof of the Lemma. 
\\

\begin{proof}[Proof of lemma \ref{aux}]
	
	We shall decompose our operator using the decomposition of Theorem \ref{remainder} for $P=m$ and $\varepsilon = 1/p$. 
	Then set $H_m = \F^{-1} (\widehat r_{N, m} \, \widehat f)$, as in \eqref{remaindereq}. 
	The desired $\ell ^2$-estimate follows immediately using Theorem \ref{remainder}.
	
	It is now obvious that $L_m$ has to be defined as the remaining approximating multipliers $L_m = \F ^{-1} \left( \widehat K_N f - \widehat H_m \right)$.
	Note that from Young's convolution inequality we get 
	\begin{align*}
		 | \Gamma _N * \check \chi _s | \lesssim N^{-1} .
	\end{align*}
	Using Lemma \ref{inverseftofL}, we can therefore compute
	\begin{align*}
		|L_m (x) | & =  \left|\sum _{Q=1}^m L_{Q,N} * f(x) \right| 
			\\
			& \lesssim    \sum _{Q=1}^m \sum _{y=1}^N  \frac{|c_Q(y)|}{ Q  } | \Gamma _N * \check \chi _s  \,( y)|   f(x-y) 
			\\
			& \lesssim \frac{1}{N} \sum _{Q=1}^m \sum _{y=1}^{2N}  \frac{|c_Q(y)|}{Q}    f(x-y) 
			\\
			& \lesssim \left( \frac{1}{2N} \sum _{y=1}^{2N} \left|\sum _{Q=1}^m \frac{|c_Q(y)|}{Q} \right| ^{p'} \right) ^{1/p'}  \left( \frac{1}{2N} \sum _{y=1}^{2N} f(x-y) \right) ^{1/p} 
			& \mbox{ by H\"older's Inequality} 
			\\
			& \lesssim m^{1/p'} \langle f \rangle _{E,1} ^{1/p}
	\end{align*}
	The last inequality comes from Lemma \ref{rsumsestimate} with $M=2N$, $J=m$, $k= p'$, and $\varepsilon = 1/p'$. Note that this estimate requires $N>N_p$.
\end{proof}

\medskip 

\section{Sparse Bounds}

Let us now turn our attention to the $(p,p)$-sparse bound for $p \in (1,2)$, and specifically the proof of Lemma \ref{sparsethm}. A recursive argument can then complete the proof of the sparse bound in Theorem \ref{sparsemaximal}. This recursive argument can be found in \cite{sparsebounds}, in the proof of Theorem 1.2, using Lemma 2.1. A similar treatment of the proof can be found in \cite{squares}.
As we have an open condition, we can again focus our attention on indicator functions. 
One can easily generalize to all functions using Lemma 4.1 from \cite{sparsebounds}.

Suppose $E$ is an interval of length $2^{n_0}$. Let $f = \one_F$ be supported on $E$ and $g = \one _G$ be supported on $G \subseteq E$. Now, consider a choice of stopping time
$\tau : E \to \{ 2^n \, : \, 1 \leq n \leq n_0 \}.$
Our goal is to prove an estimate for a particular type of stopping times, the definition of which can also be found in \cite{primes}.

\begin{definition}
	A stopping time $\tau$ is \textit{admissible} if and only if for any interval $I \subseteq E$ such that $\langle f \rangle _{3I,1} > 100 \langle f \rangle _{E,1}$, there holds $	\inf \{ \tau (x) \, : \, x \in I \} > |I|$.
\end{definition}

\begin{lemma}\label{sparsethm} 
	For all admissible stopping times, and for all $1 < p<2$ we have that
	\begin{equation} \label{stoppingtimeestimate}
	\left(K_\tau f, g \right) \lesssim \left(  \langle f \rangle _{E,1} \; \langle g \rangle _{E,1} \right) ^{1/p} \, |E|
	\end{equation}
\end{lemma}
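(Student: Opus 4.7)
The proof will follow the same architecture as the fixed-scale estimate in Section 4, with the constant scale $N$ replaced by the varying scale $\tau(x)$. The plan is to apply the high-low decomposition of Theorem \ref{remainder} pointwise at scale $\tau(x)$ with a single cutoff parameter $m \geq 1$, writing $K_\tau f = H_m + L_m$ where $L_m f(x) = \sum_{Q=1}^m L_{Q,\tau(x)} \ast f(x)$ and $H_m$ is the complementary piece encoding the remainder multiplier $\widehat{r_{\tau(x),m}}$. As in Section 4, a trivial estimate based on $d(n) \lesssim n^\delta$ will take care of the regime where $\langle f \rangle_{E,1}$ or $\langle g \rangle_{E,1}$ is negligibly small, so one may assume these averages exceed a fixed negative power of $|E|$ before optimizing $m$.

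For the high-pass term I would partition $E$ into the level sets $E_n = \{x \in E : \tau(x) = 2^n\}$ for $1 \leq n \leq n_0 := \log_2 |E|$. On each $E_n$ the operator acts at the fixed scale $2^n$, so Plancherel together with the bound $\|\widehat{r_{2^n,m}}\|_\infty \lesssim m^{-\varepsilon}$ from Theorem \ref{remainder} gives $\|H_m \one_{E_n}\|_2 \lesssim m^{-\varepsilon}\|f\|_2$. Summing over the $O(\log|E|)$ levels and pairing with $g$ by Cauchy--Schwarz yields
\begin{equation*}
(H_m f, g) \lesssim (\log|E|)^{1/2}\, m^{-\varepsilon}\, \bigl(\langle f \rangle_{E,1}\, \langle g \rangle_{E,1}\bigr)^{1/2}\, |E|.
\end{equation*}
The logarithmic factor can be absorbed by choosing $\varepsilon$ slightly larger than strictly needed, which is permitted since Theorem \ref{remainder} allows $\varepsilon$ any value below $1/2$.

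For the low-pass term, I plan to bound $L_m f(x)$ pointwise, imitating the fixed-scale proof of Lemma \ref{aux}. Using Lemma \ref{inverseftofL}, the estimate $|\Gamma_N \ast \check\chi_s| \lesssim 1/N$, and H\"older's inequality with exponent $p'$, the quantity $|L_m f(x)|$ is dominated by a Ramanujan factor controlled via Lemma \ref{rsumsestimate} by $m^{1/p'}$, times the $p$-th root of a local average of $f$ over an interval $I_x$ of length comparable to $\tau(x)$ containing $x$. Here the admissibility of $\tau$ enters decisively: applied to this $I_x$, for which $\inf_{y \in I_x} \tau(y) \leq \tau(x) = |I_x|$, the contrapositive of the admissibility definition forces $\langle f \rangle_{3 I_x, 1} \lesssim \langle f \rangle_{E,1}$. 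Hence $|L_m f(x)| \lesssim m^{1/p'}\langle f \rangle_{E,1}^{1/p}$ uniformly in $x$, giving $(L_m f, g) \lesssim m^{1/p'}\langle f \rangle_{E,1}^{1/p} \langle g \rangle_{E,1}\, |E|$.

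Combining the two bounds and optimizing $m$ in parallel with the fixed-scale argument, the choice $m \sim \langle f \rangle_{E,1}^{1/2 - 1/p}\langle g \rangle_{E,1}^{-1/2}$ (with a poly-log correction) produces the target bound $(\langle f \rangle_{E,1}\langle g \rangle_{E,1})^{1/p}|E|$. The main obstacle I anticipate is the careful invocation of admissibility in the low-pass step, and in particular the bookkeeping needed to ensure the optimal $m$ lies in the range $1 \leq m \leq \sqrt{\inf_x \tau(x)}$ required by Theorem \ref{remainder}; as in Section 4, the trivial estimate will absorb the extreme cases where this fails.
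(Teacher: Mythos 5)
There is a genuine gap, and it sits exactly where the paper's proof does its real work: the high-pass term. Your plan bounds $H_m$ by slicing $E$ into the level sets $E_n=\{\tau=2^n\}$, applying Plancherel at each fixed scale, and summing the $O(\log|E|)$ levels, which yields $\langle H_m\rangle_{E,2}\lesssim(\log|E|)^{1/2}m^{-\varepsilon}\langle f\rangle_{E,1}^{1/2}$. The claim that this logarithm ``can be absorbed by choosing $\varepsilon$ slightly larger'' fails: after optimizing, $m\sim\langle f\rangle_{E,1}^{1/2-1/p}\langle g\rangle_{E,1}^{-1/2}$, and in the regime where both averages are within a polylogarithmic factor of $1$ (say $\langle g\rangle_{E,1}\approx1$ and $\langle f\rangle_{E,1}\approx(\log|E|)^{-1}$), the admissible $m$ is itself at most polylogarithmic in $|E|$, so no fixed power $m^{-\eta}$ can compensate $(\log|E|)^{1/2}$; moreover the low-pass constraint forces $m\lesssim\langle g\rangle_{E,1}^{-1}$, so you cannot simply enlarge $m$. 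In that regime the target $(\langle f\rangle_{E,1}\langle g\rangle_{E,1})^{1/p}|E|$ is not covered by any trivial bound either (the unit-mass bound only gives $\langle g\rangle_{E,1}|E|$), so your argument does not close. This is precisely why the paper does not decompose over level sets with a single cutoff: it takes a scale-dependent cutoff $P=\lfloor\ell^{1/\delta}\rfloor$ growing with the scale $n$, so that the remainders decay like $n^{-p'}$ and the sum over scales (a square-function argument for $H_1$) converges with no logarithmic loss, and it then controls the leftover frequencies $M<Q\le P$, which occur under a supremum over scales, by Bourgain's multi-frequency maximal lemma ($H_2$), whose bound $k2^{-k/p}$ is summable in $k$. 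Your proposal contains no substitute for this step, and it is the essential new ingredient of the sparse lemma beyond the fixed-scale argument.

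Two smaller points. First, restricting to $\varepsilon<1/2$ in Theorem \ref{remainder} is not enough decay for the optimization when $p$ is near $1$: the argument needs high-pass decay essentially $m^{-1/p+\epsilon}$ (as in \eqref{stoppinghi}), which the paper gets because the true remainder bound behaves like $P^{-1}$ plus a minor-arc term, not merely $P^{-\varepsilon}$ with $\varepsilon<1/2$. Second, the low-pass estimate needs $\tau(x)\gtrsim m^{p'}$ (the hypothesis $N>J^k$ of Lemma \ref{rsumsestimate}), which is stronger than the constraint $m\le\sqrt{\tau(x)}$ you cite; the paper handles this by working only on the set where $\tau(x)^{\delta}\ge C_pM^{1/p'}$ and using, on the complement, the pointwise bound $K_\tau f(x)\lesssim\tau(x)^{\delta}\langle f\rangle_{3I_x,1}\lesssim M^{1/p'}\langle f\rangle_{E,1}$ via admissibility --- a pointwise trivial bound in $x$, not the Section 4 reduction to non-negligible averages. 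Your low-pass sketch (admissibility plus Lemma \ref{rsumsestimate}, with attention to the Schwartz tails of $\check\chi_s$) is otherwise in line with the paper.
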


	The proof of Lemma \ref{sparsethm} follows the idea of \cite{primes}.
	We will again use the auxiliary high-low construction, slightly modified this time.  Once we have the individual estimates, the final result can be obtained using the same argument as in the previous section, and is therefore omitted. For integers $M= 2^m$, we decompose $ K_\tau f \leq H+L$ where 
	\begin{align}
	\label{stoppinghi}
	\langle H \rangle _{E,2} &\lesssim M^{-\frac{1}{p} + \epsilon } \langle f \rangle ^{1/2} _{E,1} 
	\\
	\label{stoppinglo}
	\langle L \rangle _{E,\infty } &\lesssim M^{\frac{1}{p'} } \langle f \rangle ^{1/p} _{E,1} 
	\end{align}
	for some $\epsilon >0$. Note that while this is a slightly different estimate that the one in the previous section, however it does not change our computations at all, since $\epsilon $ can be chosen to be appropriately small depending on $M$ so that all our calculations still hold.

	In this case, the trivial bound is given by
	\begin{equation*}
		\langle K_\tau f \rangle _{E, \infty} 
			\lesssim 
			\sup _x \tau ^ \delta  (x) \, \cdot \, \langle f \rangle _{3E,1}.
	\end{equation*}
	for some $\delta > 0$. We pick $\delta$ so that $2(p'+1) \delta \ll 1/2$.
	It is then obvious that we need only study the operator on a set 
	$D =  \{ x \, : \,   \tau ^ \delta (x) \geq C_p \cdot M^{1/p'} \}$, 
	where $C_p$ is a constant that depends only on $p$. Outside our set, the result holds as a consequence of the trivial bound.

	We need to know what happens to $K_{\lfloor n^{1/\delta} \rfloor }f$ for $n>C_p M^{1/p'}$. 
	For convenience, let $\ell =   n^{\delta p'/ \varepsilon _1} $, where $ \varepsilon_1 = 2(p'+1) \delta \ll 1 $ by the way we chose $\delta$. Therefore, 
	$\ell \leq \sqrt {n}$, and
	by Theorem \ref{remainder}  we get
	\begin{align*}
		\widehat K_{\lfloor n ^{1/\delta}  \rfloor } 
		= \sum _{1 \leq Q \leq \lfloor \ell ^{1/\delta } \rfloor  } 
		\widehat L _{Q,\lfloor n ^{1/\delta } \rfloor} 
		+ \widehat r_{\lfloor n ^{1/\delta } \rfloor, \lfloor \ell ^{1/\delta } \rfloor}
	\end{align*}
	with $N=\lfloor n ^{1/\delta}  \rfloor$, $P = \lfloor \ell ^{1/\delta}  \rfloor$, and $\varepsilon = \varepsilon _1$. This means that $\| \widehat  r_{\lfloor n ^{1/\delta } \rfloor, \lfloor \ell ^{1/\delta } \rfloor} \| _\infty \lesssim \ell ^{ - \varepsilon _1 / \delta }  \lesssim n^{-p'}$, by the way we chose $\ell$.

	The high-low decomposition for this argument is a bit more involved. Particularly, when we decompose the operator into the approximating multipliers and the remainder, the remainder goes to the high pass term, as it did in the previous section. However, in this case we need to break the approximating multipliers further, into two pieces, one of which will become the low-pass term, and the other will be added in the high-pass term. That is, our decomposition for the operator now becomes $K_ \tau \leq H_1 + H_2 +L$, where now $H=H_1+H_2$.

	So we let $ H_1 = | r_{\lfloor n ^{1/\delta } \rfloor , \lfloor \ell ^{1/\delta } \rfloor} * f| $. In that case 
	\begin{align*}
		\langle H_1 \rangle _{E,2}   ^2
		& \leq  \sum _{ n \geq C_p M^{1/p'}} 
			\langle r_{\lfloor n ^{1/\delta } \rfloor , \lfloor \ell ^{1/\delta } \rfloor} * f \rangle _{E,2} ^2 \; \; 
			\lesssim  \sum _{ n \geq C_p M^{1/p'}} 
			 n^{-2p'} \langle  f \rangle _{E,2} ^2\; \; 
				  \lesssim M^{-2+\frac{2}{p'}} \langle f \rangle _{E, 2} ^2
	\end{align*}
	using a square function argument.
	The second contribution to the high-pass term is the following
	\begin{align*}
		H_2 & = \sup _n \left| 
		\sum _{2^m < Q \leq \lfloor \ell ^{1/\delta } \rfloor}  
		L_{Q, \lfloor n ^{1/\delta } \rfloor} *f \right| \; \; 
			  \leq \sum _{k=m}^{\log_2 \lfloor \ell ^{1/\delta } \rfloor } 
			  \sup _n \left| \sum _{2^k < Q \leq 2^{k+1}} 
			  L_{Q, \lfloor n ^{1/\delta } \rfloor} *f \right| 
	\end{align*}
	To estimate the $\ell ^2$ norm of $H_2$ we use Bourgain's Multi-frequency Lemma \cite{bourgain89}, a fundamental result in discrete Harmonic Analysis. This particular form can be found in  \cite[Prop. 5.11]{benbourg}, and provides an $\ell^2 \to \ell ^2$ bound for each of the individual terms in the sum. Specifically,
	\begin{align*}
	\left\| 	
	\sup _n \left| 
	\sum _{2^k \leq Q < 2^{k+1}} 
	L_{Q, \lfloor n ^{1/\delta } \rfloor}*f \right| \, \right\| _{\ell ^2} 
		& \lesssim k \, 2^{-\frac{k}{p}} \| f \| _{\ell ^2} 
	\end{align*}
	Summing over $k$ immediately yields the desired result.
	\\
	
	Obviously the remaining terms constitute the low-term, which according to \eqref{inversemultipliers} is given in terms of Ramanujan sums and satisfies
	\begin{align} \label{sparselowpass}
		L=
		 \left| \sum _{Q=1} ^M L_{\tau , Q }*f \right|
		  \lesssim 
		  \sum _{Q=1}^M \left(  \frac{ |c_Q(\cdot)|}{Q} \, 
		  | \Gamma _\tau * \check \chi _s ( \cdot) | \right) *f
	\end{align}
with $s\geq 1$, $2^s \leq Q < 2^{s+1}$ as in the definition of \eqref{approxmultipliers}.

Now let us take a closer look at  $\Gamma _\tau * \check \chi _ s$.
Using \eqref{gammaestimate} we can see that for $|x| \leq 4 \tau$
\begin{equation*}
|\Gamma _\tau * \check \chi_s (x) | \lesssim \frac{1}{\tau} .
\end{equation*}
Next, suppose $2^k \tau < |x| \leq 2^{k+1} \tau $, for $k \geq 2$, and recall that $\chi _s$ is a dilation of a Schwartz function, where $s\geq 1$ and $2^s \leq Q < 2^{s+1}$. Using \eqref{gammaestimate}  and the fact that $M^3 \leq C_p M^{\frac{1}{\delta p'}} \lesssim \tau$,  we obtain
\begin{align*}
	|\Gamma _\tau * \check \chi_s (x) | 
		& \; \lesssim \; 
		   \sum _{y \leq \tau } | \Gamma _\tau (y)| \, | \check \chi _s (x-y) |
		   \;  \lesssim  \; 
		   \sum _{y \leq \tau } \frac{1}{\tau} \, \frac{1}{2^{3s}} \left| \check \chi \left( \frac{x-y}{2^{3s}} \right) \right| 
		\\
		&  \; \lesssim \frac{1}{2^{3s} \tau } \, \frac{2^{6s}}{2^{2k} \tau } 
		\; \lesssim \; 
		\frac{M^3}{2^{2k}\tau^2} 
		\; \lesssim \; 
		\frac{1}{2^{2k}\tau} 
\end{align*}

Using these estimates in \eqref{sparselowpass} we get
\begin{align*}
	|L(x)|	
		& \leq 
		\frac{1}{\tau} \sum _{|y|\leq 4\tau} \sum _{Q=1}^M  \frac{|c_Q(y)|}{Q}  *f(x-y) 
		+  \sum _{k=2} ^\infty \frac{1}{4^k \tau} \; \sum _{|y|=2^k}^{ 2^{k+1}} \sum _{Q=1}^M \frac{|c_Q(y)|}{Q} *f(x-y )  
		\\
		& \lesssim 
			\left[ \frac{1}{\tau} \sum _{y=1} ^\tau \left| \sum_{Q=1}^M  \frac{c_Q(y)}{Q}  \right| ^{p'}  \right]^{\frac{1}{p'}} 
			\left[ \frac{1}{\tau} \sum _{y=1} ^\tau f(x-y)  \right] ^{\frac{1}{p} } + \\
		& \hspace{1cm} + 
			\sum _{k=2} ^\infty \frac{1}{2^k} \left[ \sum _{y=2^k} ^{2^{k+1} \tau} \frac{1}{2^{k+1} \tau} \left|  \sum_{Q=1}^M \frac{|c_Q(y)|}{Q} \right|^{p'} \right] ^{1/p'} 
			\left[ \frac{1}{2^{k+1}\tau} \sum _{y=2^k} ^{2^{k+1}\tau} f(x-y)  \right] ^{\frac{1}{p} }  \\
		& \lesssim M^{1/p'} \langle f \rangle ^{1/p} _{E,1}			
\end{align*}	
Here, we have used  H\"{o}lder's $p-p'$ inequality and Lemma \ref{rsumsestimate}. Admissibility of $\tau$ was essential to obtain the estimate in terms of the average of $f$. Our assumption that $ \tau ^ \delta (x) > C_p J^{1/p'}$ is also unavoidable, if we wish to use Lemma \ref{rsumsestimate}. Thus the constant $C_p$ has to be chosen accordingly, and our proof is complete.

\medskip 

\bibliography{ref}{}
\bibliographystyle{alpha}

\vspace*{1cm} 
 
\end{document}